\newtheorem{teo}{Theorem}[section]
\newtheorem{prop}[teo]{Proposition}
\newtheorem{lm}[teo]{Lemma}
\newtheorem{rem}[teo]{Remark}
\newcommand{\RR}{{\mathbb{R}}}
\newcommand{\NN}{{\mathbb N}}
\newcommand{\ds}{\displaystyle}
\newcommand{\der}{\partial}
\newcommand{\ep}{\varepsilon}
\newcommand{\om}{\Omega}
\newcommand{\dive}{\text{\normalfont div}}
\def\qed{\hfill$\square$\vspace{0.5cm}}    % to write a small square
\numberwithin{equation}{section}
\begin{document}

\title[A transmission problem on a polygonal partition]{A transmission problem on a polygonal partition: regularity and shape differentiability}%\thanks{}}

%    Information for first author
\author[E.~Beretta et al.]{Elena~Beretta}
\address{ Dipartimento di Matematica ``Brioschi'',
Politecnico di Milano, Italy}
\email{elena.beretta@polimi.it}
\author[]{Elisa~Francini}
\address{Dipartimento di Matematica e Informatica ``U. Dini'',
Universit\`{a} di Firenze, Italy}
\email{elisa.francini@unifi.it}
 \author[]{Sergio~Vessella}
 \address{Dipartimento di Matematica e Informatica ``U. Dini'',
Universit\`{a} di Firenze, Italy}
\email{sergio.vessella@unifi.it}

%    General info
\date{\today}

\keywords{polygonal inclusions, conductivity equation, shape
derivative}

\subjclass[2010]{35R30, 35J25, 49Q10, 49N60}
%%%%%%%%%%%%%%%%%%%%%%%%%%%%%%%%%
\begin{abstract}
We consider a transmission problem on a polygonal partition for the two-dimensional conductivity equation. For suitable classes of partitions we establish the exact behaviour of the gradient of solutions in a neighbourhood of the vertexes of the partition. This allows to prove shape differentiability of solutions and to establish an explicit formula for the shape derivative.

\end{abstract}
%%%%%%%%%%%%%%%%%%%%%%%%%%%%%%%%%%

\maketitle

%%%%%%%%%%%%%%%%%%%%%%%%%%%%%%%%%%%%%%

\section{Introduction}
In this paper we consider the conductivity equation in a bounded planar domain 
\begin{equation}\label{condeq}
 \dive(\sigma\nabla u)  =  0\mbox{ in }\Omega\subset\mathbb{R}^2.
\end{equation}
We assume the conductivity $\sigma$ of the form
\begin{equation}\label{cond}
\sigma=\sum_{i=1}^M\sigma_j\chi_{\mathcal{P}_i},
\end{equation}
where $\mathcal{P}=\{\mathcal{P}_i\}_{i=1}^M$ is a polygonal regular partition of the background medium $\Omega$. 

This assumption on the conductivity is rather natural  and arises, for example, in applications to geophysics, medical imaging and nondestructive testing of materials where the medium under investigation contains regions with different conducting properties. 
Moreover, piecewise constant coefficients represent a class of unknown functions in which Lipschitz stable reconstruction from boundary data can be expected (see \cite{AV}, \cite{BF}, \cite{GS}, for example) and it appears in many finite-element scheme used for effective reconstruction.\\

Our main goal is to study the differentiability properties of solutions to the conductivity equation (\ref{condeq}) with respect to movements of the partition $\mathcal{P}$ i.e. to establish the existence of the shape derivative of $u$. \\

This analysis is motivated by the study of the inverse conductivity problem of recovering $\sigma$ of the form (\ref{cond}) from boundary measurements.  
More precisely, in order to derive quantitative Lipschitz stability estimates for a conductivity parameter, satisfying (\ref{cond}), in terms of the Neumann to Dirichlet map $\mathcal{N}_{\sigma}$, a crucial role is played by the differentiability properties of the map \[
F:\sigma\rightarrow \mathcal{N}_{\sigma}\]
with respect to movements of the partition and by the knowledge of an explicit formula for the derivative. (see \cite{BdHFV} for the case of the Helmholtz equation).\\
In \cite{BFV17} we performed a first step proving differentiability of $F$ in the case of a single polygonal inclusion $\mathcal{P}$ contained in $\Omega$ and we derived rigorously for the first time an explicit formula for the shape derivative of $F$  expressed in terms of an integral on the boundaries  of the polygons in $\mathcal{P}$. 
One of the main issues in the study of shape differentiability is the regularity of the solution $u$ of the elliptic pde. The coefficients we consider have jumps on polygonal boundaries. The related solutions are H\"older continuous in the interior of the  domain  $\Omega$ (see \cite{deGiorgi} and \cite{Moser}) and  smooth (in fact analytic) in the interior of each polygon.  Across the sides of the polygons the solutions are continuous and have continuous conormal derivative (transmission conditions). Moreover, $\nabla u$ has a Lipschitz continuous extension from the interior of the polygon to the internal part of each side of the polygon (\cite{LN}).  When approaching the vertexes of the polygons the gradient becomes more singular and an analysis  of the exact behaviour of gradients of solutions in a neighbourhood of vertexes of $\mathcal{P}$ is needed.  In  the case of a single polygonal inclusion we used the analysis derived in \cite{BFI}.\\
 In the more general case considered in this paper the situation is far more complicated. In this case, again, a crucial step is played by the analysis of  the differentiability properties of the solutions in a neighbourhood of the points of intersection of the sides of the polygons but the behaviour of $u$ depends on how the sides of elements of the partition intersect at those points.\\  
In fact,  from \cite{PS}, it is known that for solutions of (\ref{condeq}) with conductivities $\sigma\in L^{\infty}(\Omega)$ satisfying 
\[
\lambda\leq \sigma\leq\Lambda \text{ a.e. in } \Omega\subset\mathbb{R}^2
\]
 the H\"older exponent $\alpha$ can be computed explicitly and has the form 
\[\alpha=\frac{4}{\pi}\arctan\left(\sqrt{\frac{\lambda}{\Lambda}}\right).\] 
This represents the worse H\"older exponent for solutions to (\ref{condeq})  and it is attained for solutions corresponding to partitions meeting in a vertex with four sides at a right angle. So, in general, the regularity of solutions to (\ref{condeq})  and (\ref{cond}) does not allow us to prove shape differentiability of $u$.\\ 
In this paper we succeed in determining classes of partitions for which the regularity of the solutions and its gradients at the points of intersection of the polygons is enough to guarantee differentiability of solutions $u$. Furthermore, we establish an explicit formula for the shape derivative of $u$, $u'$, on the boundary of $\Omega$.
The paper is organized as follows: in Section 2 we prove the estimate on the behaviour of $\nabla u$ in a neighbourhood of the points of the partition with no more than 3 sides intersecting. In Section 3 we use this estimate to prove the existence of the shape derivative $u'$ with respect to movements of the partition, to find and explicit representation formula on the boundary of $\Omega$  and derive some relevant consequences.

\section{Behaviour of  $\nabla u$ in a neighbourhood of a vertex of certain classes of partitions}\label{stima}
Let $B$ be the open disk of radius $r_0$ centered at the origin $O=(0,0)$ and let $\sigma$ be a piecewise constant coefficient defined in $\overline{B}$ expressed in polar coordinates by
\[
\sigma(\rho,\theta)=\left\{\begin{array}{rcl}
\sigma_1&\mbox{for }&\beta_0:=0\leq\theta<\beta_1,\\
\sigma_2&\mbox{for }&\beta_1\leq\theta<\beta_2,\\
\sigma_3&\mbox{for }&\beta_2\leq\theta<\beta_3:=2\pi,
\end{array}\right.
\]
where 
\[
0<\sigma_0\leq\sigma_k\leq \sigma_0^{-1}, \mbox{ for }k=1,2,3.
\]

Let $u\in H^1(B)$ be a solution to
\[
\dive(\sigma\nabla u)=0\mbox{ in }B.\]
For $k=1,2,3$, let us denote by
\[D_k=\{(\rho,\theta)\,:\, 0<\rho<r_0,\,\beta_{k-1}\leq\theta\leq\beta_k\}\]
and by
\[u_k=u_{|_{D_k}}.\]

Each function $u_k$ is harmonic in $D_k$ and transmission conditions at the boundaries of $D_k$ hold, that is, $u$ and $\sigma\frac{\der u}{\der n_k}$ are  continuous across these boundaries.
Moreover, by Theorem 1.1 in \cite{LN} each function $u_k$ can be extended as a $C^{1,\alpha}$ function up to the boundary of the sector $D_k$ and $C^{1,\alpha}$ norm of $u_k$ can be bounded in terms of the $L^2$ norm of $u$  uniformly on subsets of $\overline{D_k}$ that have positive distance from the origin. 
\begin{teo}\label{stimagrad}
If, for some $\overline{\beta}\in (0,\pi)$, 
\begin{equation}\label{ipangoli}\beta_{k}-\beta_{k-1}\leq \pi-\overline{\beta},\mbox{ for }k=1,2,3,\end{equation}
there exist $C>0$ and $\gamma>1/2$ depending only on $\overline{\beta}$, $r_0$ and $\sigma_0$, such that
\begin{equation}\label{gradest}\left|\nabla u_k(x,y)\right|\leq C 
\|u\|_{H^1(B)}dist((x,y),O)^{\gamma-1},\mbox{ for } (x,y)\in D_k.\end{equation}
\end{teo}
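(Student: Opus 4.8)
The plan is to place the vertex at the origin and reduce the local behaviour of $u$ to the spectral analysis of the associated angular transmission problem. Since $\sigma$ is constant on each sector $D_k$, every $u_k$ is harmonic, so separation of variables suggests building-block solutions $\rho^{\nu}\phi(\theta)$, where on each arc $(\beta_{k-1},\beta_k)$ the profile solves $\phi''+\nu^2\phi=0$, and the transmission conditions ($u$ and $\sigma\,\partial u/\partial n$ continuous across the rays $\theta=\beta_k$, including the wrap-around at $\theta=0\equiv 2\pi$) become continuity of $\phi$ and of $\sigma\phi'$. This is a self-adjoint Sturm--Liouville problem on the circle with weight $\sigma$ and interface conditions; it has a discrete spectrum $0=\mu_0<\mu_1\le\mu_2\le\cdots$ and a complete $L^2_\sigma(0,2\pi)$-orthonormal system $\phi_j$. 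Setting $\nu_j=\sqrt{\mu_j}$, I would expand $u$ near the origin as $u=c_0+\sum_{j\ge1}c_j\rho^{\nu_j}\phi_j(\theta)$; membership $u\in H^1(B)$ discards the companion solutions $\rho^{-\nu_j}$ and $\log\rho$ and isolates the leading positive exponent $\gamma:=\nu_1$. The whole theorem then reduces to proving $\gamma=\nu_1>1/2$.

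For the second step I record a usable description of the exponents. The weak form of the angular problem reads $\int_0^{2\pi}\sigma\phi'\eta'\,d\theta=\nu^2\int_0^{2\pi}\sigma\phi\eta\,d\theta$ for all periodic $\eta$ (the interface and wrap-around boundary terms cancel upon integration by parts), so by the min--max principle
\begin{equation*}
\nu_1^2=\min\left\{\frac{\int_0^{2\pi}\sigma|\phi'|^2\,d\theta}{\int_0^{2\pi}\sigma|\phi|^2\,d\theta}\ :\ \phi\ \text{periodic},\ \int_0^{2\pi}\sigma\phi\,d\theta=0\right\},
\end{equation*}
the constraint expressing $L^2_\sigma$-orthogonality to the constants ($\mu_0=0$). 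Equivalently, encoding each arc by the $SL(2,\mathbb{R})$ transfer matrix $T_k(\nu)$ for $(\phi,\sigma\phi')$ across $D_k$ and setting $M(\nu)=T_3T_2T_1$, the exponents are exactly the $\nu\ge0$ with $\operatorname{tr}M(\nu)=2$, where $\nu=0$ is the constant mode and $\nu_1$ is the first positive root.

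The crux, and the main obstacle, is the lower bound $\nu_1>1/2$, which is precisely where hypothesis (\ref{ipangoli}) must enter: the introduction's worst case (four quadrants meeting at a right angle) already produces an exponent below $1/2$, so any valid argument has to exploit that only \emph{three} materials meet and that each opening is $\le\pi-\overline\beta<\pi$. I would argue by a Pr\"ufer/oscillation analysis. The first nonconstant eigenfunction has exactly two sign changes on the circle, so its Pr\"ufer phase must increase by $2\pi$ over one loop. On each arc the phase increases by exactly $\nu d_k$ (with $d_k=\beta_k-\beta_{k-1}$), contributing $2\pi\nu$ in total, while at each of the three interfaces it jumps by an amount governed by the contrast $\sigma_{k+1}/\sigma_k$ and bounded in modulus by $\pi/2$; call $J$ the sum of these three jumps. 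Since the three contrasts multiply to $1$ around the loop, the jumps cannot all be large and positive, and combined with the arc bound $\nu d_k\le\nu(\pi-\overline\beta)<\pi/2$ valid for $\nu\le1/2$, the total $J$ stays below $\pi$, which is incompatible with the required balance $2\pi\nu+J=2\pi$ unless $\nu>1/2$. Making the jump estimate quantitative in $\overline\beta$ and $\sigma_0$ (equivalently, showing $\operatorname{tr}M(\nu)\neq2$ on $(0,1/2]$) is the delicate point; a convenient cross-check is the degenerate limit $\sigma_k\to0$, in which an insulating sector decouples and leaves a Neumann sector problem of opening $\le2(\pi-\overline\beta)$ with first exponent $\tfrac{\pi}{2(\pi-\overline\beta)}>\tfrac12$, both saturating the threshold and exhibiting the dependence on $\overline\beta$. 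A compactness argument in the projective parameter space of contrasts and openings, including such limits, then yields a uniform $\gamma>1/2$ depending only on $\overline\beta$ and $\sigma_0$.

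Finally I would convert the exponent bound into the pointwise estimate (\ref{gradest}). From the expansion and Parseval in $L^2_\sigma$ the coefficients $c_j$ are controlled by $\|u\|_{H^1(B)}$, giving the growth bound $\|u-c_0\|_{L^\infty(B_r)}\le C r^{\gamma}\|u\|_{H^1(B)}$ for $r\le r_0/2$. Applying Theorem~1.1 of \cite{LN} to $u-c_0$ on the dyadic annulus $\{r/2<\rho<2r\}$ after rescaling to unit size, the interior $C^{1,\alpha}$ bound controls $\nabla u_k$ by the $L^2$ norm of $u-c_0$ on a concentric slightly larger annulus, and tracking the scaling yields $\|\nabla u_k\|_{L^\infty(\{\rho\sim r\}\cap D_k)}\le C r^{\gamma-1}\|u\|_{H^1(B)}$. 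Letting $r=\mathrm{dist}((x,y),O)$ range over dyadic scales gives (\ref{gradest}).
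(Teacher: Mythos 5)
Your overall architecture --- separation of variables, the weighted angular Sturm--Liouville problem, discarding the $\rho^{-\nu}$ and $\log\rho$ branches via $H^1$-membership, reduction of the whole theorem to the spectral bound $\nu_1>1/2$, and the final conversion into the pointwise gradient estimate --- is exactly the paper's (for the last step the paper differentiates the eigenfunction series term by term, using $\sup$-bounds on $v^{(j)}$ and $\partial_\theta v^{(j)}$, while you rescale Li--Nirenberg estimates on dyadic annuli; either works). The genuine gap is in the one step everything hinges on, the bound $\nu_1>1/2$. Your Pr\"ufer accounting of the interface jumps is incorrect: at an interface the phase $\psi=\arctan(\nu\phi/\phi')$ transforms by $\tan\psi^+=\lambda\tan\psi^-$ with $\lambda=\sigma^+/\sigma^-$, which indeed preserves each open quadrant (so $|\psi^+-\psi^-|<\pi/2$), but the \emph{sign} of the jump is the sign of $(\lambda-1)\tan\psi^-$, i.e.\ it depends on which quadrant the phase occupies, not only on whether the contrast exceeds $1$. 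Consequently ``the three contrasts multiply to $1$'' does not force ``at most two positive jumps'', all three jumps can be positive simultaneously, and your claim that $J<\pi$ fails. This is not a presentational issue: the paper's counterexample at the end of Section \ref{stima} ($\beta_1=\pi/6$, $\beta_2=\pi/3$, $\sigma_1=10^{-1}$, $\sigma_2=10^3$, $\sigma_3=10$) has only \emph{one} contrast ratio larger than $1$, yet its first exponent lies below $1/2$, so at that eigenvalue the jumps must total at least $\pi$ --- flatly contradicting your sign rule. Note also that your budget never genuinely uses the angle hypothesis \eqref{ipangoli} (the total arc contribution is $2\pi\nu$ regardless of how the openings are distributed), whereas any correct proof must use it essentially; and the compactness fallback cannot fill the hole, since it presupposes the pointwise strict inequality $\nu_1>1/2$ that the jump argument was supposed to deliver (moreover the limits $\sigma_k\to 0$ exit the admissible class $\sigma_0\le\sigma_k\le\sigma_0^{-1}$).

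For comparison, the paper proves the bound by writing the $2\times 2$ transfer matrices $M_k$ explicitly and expanding the eigenvalue equation $\det(M_3M_2M_1-I)=0$: the determinant equals $2(1-\cos 2\pi\gamma)$ plus three terms of the form $\mu\,\sin(\gamma a)\sin(\gamma b)\cos(\gamma c)$ with $\mu\ge 0$ and $a,b,c$ the three openings; under \eqref{ipangoli} every such term is nonnegative as long as $\gamma\le \pi/(2(\pi-\overline{\beta}))$, so the determinant is strictly positive on that range and $\gamma_1\ge \pi/(2(\pi-\overline{\beta}))>1/2$, an explicit bound with no compactness argument. If you prefer to stay with Pr\"ufer variables, the correct counting is different from yours: jumps never cross the levels $k\pi/2$, a nonconstant periodic eigenfunction needs total phase increase at least $2\pi$, hence at least four upward crossings of quarter levels, and crossings occur only along arcs; since each arc advances the phase by exactly $\nu(\beta_k-\beta_{k-1})\le\nu(\pi-\overline{\beta})<\pi/2$ whenever $\nu<\pi/(2(\pi-\overline{\beta}))$, each of the three arcs produces at most one crossing, giving at most three --- a contradiction. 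That repaired argument uses \eqref{ipangoli} exactly where it must, needs no bound on the size or sign of the jumps, and recovers the paper's explicit constant.
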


In order to prove Theorem \ref{stimagrad}, let us show the following expansion for solution $u$.
\begin{prop}\label{sviluppo} Under the same assumptions of Theorem \ref{stimagrad} the following expansion holds for $0<r\leq\frac{r_0}{2}$ and  $k=1,2,3$
\begin{equation}\label{formsvil}
	u_k(r,\theta)=u_k(0)+\sum_{j=1}^{\infty}r^{\gamma_j}\left(A_j^k\cos(\gamma_j\theta)+B_j^k\sin(\gamma_j\theta)\right)\mbox{ for }\theta\in (\beta_{k-1},\beta_{k}).
	 \end{equation}
The series are convergent uniformly in $0<r\leq\frac{r_0}{2}$ and their first derivatives are absolutely convergent in the same set.
The sequence $\gamma_j$ is monotone increasing, there are $c_1$ and $c_2$ such that
\begin{equation}\label{asintautoval}
	0<c_1\leq \frac{\gamma_j}{j}\leq c_2\mbox{ for all }j\in\NN,
\end{equation}
and
\begin{equation}
	\gamma_1>\frac{1}{2}.
\end{equation}
\end{prop}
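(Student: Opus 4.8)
The plan is to obtain \eqref{formsvil} by separation of variables, reducing the problem to a one–dimensional transmission eigenvalue problem on the circle and reading off the exponents $\gamma_j$ as its eigenvalues. First I would look for $\sigma$–harmonic functions of the form $r^{\gamma}\phi(\theta)$: since each $u_k$ is harmonic, $\phi$ must solve $\phi''+\gamma^2\phi=0$ on every arc $(\beta_{k-1},\beta_k)$, while the transmission conditions ($u$ and $\sigma\,\partial u/\partial n$ continuous across the rays $\theta=\beta_k$, together with the closing-up condition at $\theta=0\equiv2\pi$) become $[\phi]=0$ and $[\sigma\phi']=0$ at $\beta_1,\beta_2$ and $0$. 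Thus $\phi$ solves the weighted Sturm--Liouville problem $-(\sigma\phi')'=\gamma^2\sigma\phi$ on the circle with these interface conditions, which is self-adjoint and nonnegative in $L^2((0,2\pi),\sigma\,d\theta)$. Hence it has a discrete spectrum $0=\mu_0<\mu_1\le\mu_2\le\cdots\to\infty$ and an orthonormal basis of eigenfunctions $\Phi_j$; since on each arc an eigenfunction is a trigonometric polynomial, multiplicities are at most two. I would set $\gamma_j=\sqrt{\mu_j}$, the value $\mu_0=0$ producing the constant term $u_k(0)$ in \eqref{formsvil}, while the restriction of $\Phi_j$ to $D_k$ is exactly $A_j^k\cos(\gamma_j\theta)+B_j^k\sin(\gamma_j\theta)$.

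For the growth estimate \eqref{asintautoval} I would use the Courant--Fischer min--max principle. Because $\sigma_0\le\sigma\le\sigma_0^{-1}$, the Rayleigh quotient $\int\sigma(\phi')^2/\int\sigma\phi^2$ is pinched, pointwise in $\phi$, between $\sigma_0^2$ and $\sigma_0^{-2}$ times the unweighted quotient $\int(\phi')^2/\int\phi^2$ of the constant-coefficient periodic problem, whose eigenvalues are $0,1,1,4,4,9,9,\dots$ (so the $j$-th one is $\asymp(j/2)^2$). The pointwise inequality passes to the min--max values, giving $\sigma_0^2\,\mu_j^{(1)}\le\mu_j\le\sigma_0^{-2}\,\mu_j^{(1)}$; taking square roots yields $0<c_1\le\gamma_j/j\le c_2$ with $c_1,c_2$ depending only on $\sigma_0$.

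The heart of the matter is the lower bound $\gamma_1>\tfrac12$, and this is the step that genuinely needs the opening condition \eqref{ipangoli}: the crude bound $\mu_1\ge\sigma_0^2$ above degenerates as the contrast grows and is useless here. I would encode the eigenvalue problem through the monodromy (transfer) matrix $M(\gamma)=J_3R_3(\gamma)\,J_2R_2(\gamma)\,J_1R_1(\gamma)$, where $R_k(\gamma)$ propagates $(\phi,\phi')$ across the arc of opening $\delta_k=\beta_k-\beta_{k-1}$ and $J_k=\mathrm{diag}(1,\sigma_k/\sigma_{k+1})$ (indices mod $3$) realises the flux jump. Since $\det M(\gamma)=1$, a nontrivial periodic solution exists iff $\mathrm{tr}\,M(\gamma)=2$, so the $\gamma_j$ are the positive roots of this even, real-analytic function, with $\mathrm{tr}\,M(0)=2$. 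The claim reduces to showing $\mathrm{tr}\,M(\gamma)\neq2$ on $(0,\tfrac12]$, which I would do by expanding the $2\times2$ product and fixing the sign of $\mathrm{tr}\,M(\gamma)-2$. This is where \eqref{ipangoli} enters decisively: for $\gamma\le\tfrac12$ it forces $\gamma\delta_k\le\tfrac12(\pi-\overline\beta)<\pi/2$, so each $\cos(\gamma\delta_k)$ and $\sin(\gamma\delta_k)$ is positive and of controlled size, and this positivity can be propagated through the product. I expect this trigonometric sign analysis to be the main obstacle; a useful parallel handle is the Pr\"ufer substitution $\phi=\rho\sin\psi$, $\sigma\phi'=\gamma\rho\cos\psi$, under which $\psi$ is \emph{continuous} across the interfaces and $\psi'=\gamma(\sigma^{-1}\cos^2\psi+\sigma\sin^2\psi)>0$, giving the oscillation structure — though the amplitude-closing relation $\oint\rho'/\rho=0$ is the delicate point that still requires the opening bound.

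Finally, to prove \eqref{formsvil} and its convergence I would expand $u$ itself in the eigenbasis: for each fixed $r$, $\theta\mapsto u(r,\theta)\in L^2((0,2\pi),\sigma\,d\theta)$, so $u(r,\theta)=\sum_{j\ge0}c_j(r)\Phi_j(\theta)$. Testing the weak form of $\dive(\sigma\nabla u)=0$ against $f(r)\Phi_j(\theta)$ decouples the modes and shows each $c_j$ solves the Cauchy--Euler equation $c_j''+r^{-1}c_j'-\gamma_j^2r^{-2}c_j=0$, spanned by $r^{\gamma_j}$ and $r^{-\gamma_j}$ (by $1$ and $\log r$ for $j=0$). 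Since $u\in H^1(B)$ and $\gamma_j>0$ for $j\ge1$, the solutions $r^{-\gamma_j}$ and $\log r$ fail to be in $H^1$ near the origin and must be discarded, leaving $c_j(r)=a_jr^{\gamma_j}$ and $c_0\equiv u_k(0)$, the common vertex value being finite by the interior H\"older continuity of $u$; this is exactly \eqref{formsvil}. For convergence, orthogonality ($\int\sigma\Phi_i\Phi_j=\delta_{ij}$, $\int\sigma\Phi_i'\Phi_j'=\gamma_j^2\delta_{ij}$) gives the energy identity $\sum_j a_j^2\gamma_j r_0^{2\gamma_j}<\infty$; combined with $\gamma_j\gtrsim j$ and the geometric factor $(r/r_0)^{2\gamma_j}\le4^{-\gamma_j}$ for $r\le r_0/2$, a Cauchy--Schwarz estimate yields uniform convergence of the series and absolute convergence of its termwise derivatives on $\{0<r\le r_0/2\}$, the polynomial-in-$\gamma_j$ bounds for $\|\Phi_j\|_\infty,\|\Phi_j'\|_\infty$ being absorbed by the geometric decay. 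With \eqref{formsvil} and these properties established, Theorem \ref{stimagrad} then follows by differentiating termwise and invoking $\gamma_1>\tfrac12$.
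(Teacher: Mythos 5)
Your strategy coincides with the paper's own proof: expansion of $u$ in the eigenbasis of the weighted angular operator on $S^1$ (with the singular radial solutions $r^{-\gamma_j}$, $\log r$ discarded by finite energy), a variational/min--max comparison with the unweighted periodic problem to get \eqref{asintautoval}, and a transfer-matrix characterization of the eigenvalues; your trace condition $\mathrm{tr}\,M(\gamma)=2$ is equivalent to the paper's $\det(M_3M_2M_1-I)=0$, since $\det(M_3M_2M_1)=1$ gives $\det(M_3M_2M_1-I)=2-\mathrm{tr}(M_3M_2M_1)$. All of that part of your proposal is sound.

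The genuine gap is at the crux, the bound $\gamma_1>\frac{1}{2}$, which is the only place where hypothesis \eqref{ipangoli} is used: you reduce it to ``expanding the $2\times2$ product and fixing the sign of $\mathrm{tr}\,M(\gamma)-2$'', explicitly flag this as the expected main obstacle, and offer only the heuristic that positivity of $\cos(\gamma\delta_k)$, $\sin(\gamma\delta_k)$ ``can be propagated through the product''. That heuristic is not an argument: the matrices $M_k$ have entries of both signs (the lower-left entry is $-\frac{\sigma_k}{\sigma_{k+1}}\gamma\sin(\gamma\delta_k)<0$), so no entrywise positivity survives multiplication. What makes the computation work is a specific algebraic structure that your proposal never exhibits: setting $\delta_1=\beta_1$, $\delta_2=\beta_2-\beta_1$, $\delta_3=2\pi-\beta_2$, the expansion groups as
\begin{align*}
\det(M_3M_2M_1-I)={}&2(1-\cos 2\pi\gamma)+\mu_2\sin(\gamma\delta_1)\sin(\gamma\delta_3)\cos(\gamma\delta_2)\\
&+\mu_1\sin(\gamma\delta_2)\sin(\gamma\delta_3)\cos(\gamma\delta_1)+\mu_3\sin(\gamma\delta_1)\sin(\gamma\delta_2)\cos(\gamma\delta_3),
\end{align*}
where each $\mu_i$ is of the form $\frac{\sigma_a}{\sigma_b}+\frac{\sigma_b}{\sigma_a}-2\ge 0$: the conductivity contrast enters \emph{only} through these nonnegative coefficients, while the single contrast-free term $2(1-\cos 2\pi\gamma)$ is strictly positive for $\gamma\in(0,1)$. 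Granting this identity, your sign analysis does close: under \eqref{ipangoli}, for $0<\gamma\le\frac{1}{2}$ every argument satisfies $\gamma\delta_k\le\frac{1}{2}(\pi-\overline{\beta})<\frac{\pi}{2}$, so all the sines and cosines above are nonnegative, the determinant is strictly positive, and no eigenvalue lies in $(0,\frac{1}{2}]$. But without proving this grouping (or completing the Pr\"ufer alternative, which you also leave open at exactly the same point), the central claim $\gamma_1>\frac{1}{2}$ --- the heart of the proposition and the step on which Theorem \ref{stimagrad} rests --- is asserted rather than proved.
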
 
\begin{proof}
We follow the outline of \cite{BFI}. 
Let us define the function 
$a(\theta) = \sigma(r_0,\theta)$ for $\theta\in [0,2\pi]$ and introduce the weighted spaces $L^2_a(S^1), H^1_a(S^1)$ with norms
\begin{align*}
&\| v\|_{L^2_a(S^1)} = \left( \int_0^{2\pi}a(\theta)|v(\theta)|^2 d\theta \right)^{1/2},\\
&\| v\|_{H^1_a(S^1)} = \left( \int_0^{2\pi}a(\theta)\left(\left|\frac{\der v}{\der\theta}(\theta)\right|^2+ |v(\theta)|^2 \right)d\theta \right)^{1/2}.
\end{align*}
Define
\begin{equation}
\mathcal{L} v =\frac{1}{a}\frac{\partial}{\partial \theta}\left(a \frac{\partial}{\partial \theta}v\right).
\end{equation}
$\mathcal{L}$ is an unbounded, selfadjoint, positive elliptic operator with dense domain in $L^2_a(S^1)$, and $(\mathcal{L} +1)^{-1}$ is compact. Let us denote by $\gamma^2_j$, $(\gamma_j \geq 0)$ the positive eigenvalues of $\mathcal{L}$ that constitute its spectrum. We denote the corresponding complete orthonormal sequence by $\{ v^{(j)} \}$, which is a basis for $L^2_a(S^1)$.

The solution $u$ can be written, for $0<r<r_0$ as 
\begin{equation}\label{serieaut}u(r,\theta)=u(0)+\sum_{j=1}^{\infty} C_jr^{\gamma_j}v^{(j)}(\theta).\end{equation}
Since $u_r\in L^2_a(S^1)$ for $r=r_0$, we have
\begin{equation}\label{Kappa}K:=\sum_{j=1}^{\infty}C_j^2\gamma_j^2r_0^{2\gamma_j}<\infty.\end{equation}

The asymptotic behaviour of eigenvalues \eqref{asintautoval} is obtained  from the variational formulation for the eigenvalues: see, for example, \cite[Example 4.6.1]{D}.

We now want to estimate from below the first positive eigenvalue of $\mathcal{L}$.
Let $v\in H^1_a(S^1)$ be solution to
\begin{equation}\label{eqaut}
	\mathcal{L}v+\gamma^2v=0
\end{equation}
such that
\[\int_{0}^{2\pi}a v^2(\theta)d\theta=1\]
and $\gamma>0$.
The function $v(\theta)$ satisfies the equation
\[\frac{\der}{\der\theta}\left(a(\theta)\frac{\der}{\der\theta}v(\theta)\right)+\gamma^2a(\theta)v(\theta)=0,\mbox{ for }0\leq\theta\leq 2\pi,\]
with 
\[v(0)=v(2\pi).\]
Let $v_k=v_{|_{[\beta_{k-1},\beta_k]}}$ for $k=1,2,3$. By considering the equation in $[\beta_0,\beta_1]$ we have
\[v_1(\theta)=v_1(0)\cos(\gamma\theta)+\gamma^{-1}v_1^\prime(0)\sin(\gamma\theta).\]
By the transmission conditions at $\theta=\beta_1$ we get
\begin{align*}
&v_2(\beta_1)=v_1(\beta_1)=v_1(0)\cos(\gamma\beta_1)+\gamma^{-1}v_1^\prime(0)\sin(\gamma\beta_1),\\
&v_2^\prime(\beta_1)=\frac{\sigma_1}{\sigma_2}v_1^\prime(\beta_1)=\frac{\sigma_1}{\sigma_2}\left\{-v_1(0)\gamma\sin(\gamma\beta_1)+v_1^\prime(0)\cos(\gamma\beta_1)\right\}
\end{align*}
that can be written as
\[
\begin{pmatrix}v_2(\beta_1)\\v_2^\prime(\beta_1)\end{pmatrix}=M_1 \begin{pmatrix}v_1(0)\\v_1^\prime(0)\end{pmatrix}
\]
where
\[M_1=\begin{pmatrix}\cos\gamma(\beta_1-\beta_0)&\gamma^{-1}\sin\gamma(\beta_1-\beta_0)\\
-\frac{\sigma_1}{\sigma_2}\gamma \sin\gamma(\beta_1-\beta_0)
&\frac{\sigma_1}{\sigma_2}\cos\gamma(\beta_1-\beta_0)\end{pmatrix}.\]
In the same way, by writing explicitely the solution of the ordinary differential equation in $[\beta_1, \beta_2]$, exploiting the transmission conditions at $\theta=\beta_2$,  considering the solution in $[\beta_2,\beta_3]$ and, finally, using the transmission conditions at $\theta=\beta_3=2\pi$ we get
\[\begin{pmatrix}v_1(0)\\v_1^\prime(0)\end{pmatrix}=M_3M_2M_1 \begin{pmatrix}v_1(0)\\v_1^\prime(0)\end{pmatrix}
\]
where 
\[M_j=\begin{pmatrix}\cos\gamma(\beta_j-\beta_{j-1})&\gamma^{-1}\sin\gamma(\beta_j-\beta_{j-1})\\
-\frac{\sigma_1}{\sigma_2}\gamma \sin\gamma(\beta_j-\beta_{j-1})
&\frac{\sigma_1}{\sigma_2}\cos\gamma(\beta_j-\beta_{j-1})\end{pmatrix}.\]

Hence the eigenvalue problem is equivalent to
\[det\left(M_3M_2M_1-I\right)=0.\]
The determinant above can be explicitly evaluated and has the form
\begin{align*}
&det\left(M_3M_2M_1-I\right)=\\
&2(1-\cos2\pi\gamma)+\mu_2\sin\gamma\beta_1\sin\gamma(2\pi-\beta_2)\cos\gamma(\beta_2-\beta_1)+\\
&+\mu_1\sin\gamma(\beta_2-\beta_1)\sin\gamma(2\pi-\beta_2)\cos\gamma\beta_1+\mu_3\sin\gamma\beta_1\sin\gamma(\beta_2-\beta_1)\cos\gamma(2\pi-\beta_2),
\end{align*}
where
\[\mu_2=\frac{\sigma_3}{\sigma_1}+\frac{\sigma_1}{\sigma_3}-2,\,\,\mu_1=\frac{\sigma_3}{\sigma_2}+\frac{\sigma_2}{\sigma_3}-2,\,\,\mu_3=\frac{\sigma_2}{\sigma_1}+\frac{\sigma_1}{\sigma_2}-2.\]
Note that the coefficients $\mu_j$ are non negative and $1-\cos2\pi\gamma>0$ for $\gamma\in(0,1)$, hence,
\[det\left(M_3M_2M_1-I\right)>0\]
for 
\[0<\gamma\leq \frac{1}{2}\min\left\{\frac{\pi}{\beta_2-\beta_1},\frac{\pi}{\beta_1},\frac{\pi}{2\pi-\beta_2}\right\}.\]
Since, by assumption \eqref{ipangoli}
\[\frac{1}{2}\min\left\{\frac{\pi}{\beta_2-\beta_1},\frac{\pi}{\beta_1},\frac{\pi}{2\pi-\beta_2}\right\}\geq \frac{1}{2}\frac{\pi}{\pi-\overline{\beta}}>\frac{1}{2},\]
we have that the first non zero eigenvalue $\gamma_1$ is strictly larger than $\frac{1}{2}$.
 \end{proof}

\textit{Proof of Theorem \ref{stimagrad}.} 
Let us consider the series expansion \eqref{serieaut} where $v^{(j)}$ are eingenfuctions related to eigenvalue $\gamma_j$ with
\begin{equation}\label{normalizz}\int_{0}^{2\pi} a(v^{(j)})^2d\theta=1.\end{equation}
The weak form of equation \eqref{eqaut} gives
\begin{equation*}
	\int_{0}^{2\pi}\left(a\frac{\der v^{(j)}}{\der\theta} \frac{\der w}{\der\theta}  -a\gamma_j^2v^{(j)}w\right)d\theta=0\mbox{ for every }w\in H^1_a(S^1).
\end{equation*}
By choosing $w=v^{(j)}$ we have, by \eqref{normalizz},
\begin{equation*}
	\int_{0}^{2\pi}a\left(\frac{\der v^{(j)}}{\der\theta}\right)^2d\theta=\gamma_j^2\int_{S^1}a(v^{(j)})^2d\theta=\gamma_j^2.
\end{equation*}
Now we recall (see \cite{Brezis}) that for some universal constant $c$ 
\[|v^{(j)}(\theta)|\leq c\|v^{(j)}\|_{H^1(S^1)},\]
and, hence, since $\gamma_j>1/2$, there is a constant $C$ depending only on $\sigma_0$ such that
\begin{equation}\label{2-11}
	\left|v^{(j)}(\theta)\right|\leq C\gamma_j \mbox{ for }0\leq\theta\leq 2\pi.
\end{equation}
From \eqref{serieaut} and \eqref{2-11}, by H\"older inequality and by \eqref{Kappa}, we have for $0<r\leq\frac{r_0}{2}$
\begin{eqnarray}\label{ur}
	|u_r(r,\theta)|&\leq& Cr^{\gamma_1-1}\sum_{j=1}^{\infty}|C_j|r^{\gamma_j-\gamma_1}\gamma_j^2\nonumber\\
	&\leq&\frac{C}{r_0}\left(\frac{r}{r_0}\right)^{\gamma_1-1}\left(\sum_{j=1}^\infty\left(\frac{r}{r_0}\right)^{2\gamma_j}\gamma_j^2\right)^{1/2}\left(\sum_{j=1}^{\infty}C_j^2r_0^{2\gamma_j}\gamma_j^2\right)^{1/2}\nonumber\\
	&\leq&\frac{C\sqrt{\tilde{C}K}}{r_0}\left(\frac{r}{r_0}\right)^{\gamma_1-1}
\end{eqnarray}
where $\tilde{C}=\sum_{j=1}^\infty 2^{-2\gamma_j}\gamma_j^2$ (the convergence of this series is a consequence of \eqref{asintautoval}).
Moreover, by equation \eqref{eqaut} we get that
\begin{equation}
	\left(\frac{\der^2 v^{(j)}}{\der\theta^2}\right)(\theta)=\gamma_j^2v^{(j)}(\theta) \mbox{  in }(0,2\pi)\setminus\{\beta_1,\beta_2\},
\end{equation}
and, by \eqref{2-11}, we get 
\begin{equation}
	\left|\left(\frac{\der^2 v^{(j)}}{\der\theta^2}\right)(\theta)\right|\leq C\gamma_j^3  \mbox{  in }(0,2\pi)\setminus\{\beta_1,\beta_2\}.
\end{equation}
By \eqref{2-11}, (2.14), Sobolev Imbedding Theorem and interpolation inequalities in each subset of $[0,2\pi]$ in which $a$ is constant, we have
\begin{equation}\label{1-14}
	\left\|\left(\frac{\der v^{(j)}}{\der\theta}\right)\right\|_{L^\infty([0,2\pi])}\leq \frac{C}{\overline{\beta}}\gamma_j^2,
\end{equation}
where $C$ depends on $\sigma_0$.
Then, proceeding as before,
\begin{equation}\label{utheta}
	\frac{1}{r}|u_\theta(r,\theta)|\leq  \frac{C}{\overline{\beta}r_0}\left(\frac{r}{r_0}\right)^{\gamma_1-1}\sqrt{\tilde{C}K}.
\end{equation}
From \eqref{ur} and \eqref{utheta}, for $0<r<\frac{r_0}{2}$, we have
\begin{equation}
	|\nabla u|\leq \frac{C}{\overline{\beta}r_0}\left(\frac{r}{r_0}\right)^{\gamma_1-1}\sqrt{\tilde{C}K}
\end{equation}
on each $D_k$ for $k=1,2,3$. 
By  \eqref{Kappa},  $\sqrt{K}$ can be bounded in terms of $\|u\|_{H^1(B)}$.
\qed
\begin{rem}
Estimate \eqref{gradest} holds true also if coefficient $\sigma$ attains only two different values on two non degenerate sectors, see \cite{BFI}.

Nevertheless, if we consider a vertex at which more than 3 sides intersects, then the estimate is not true anymore. A counterexample of this estimate can be easily constructed in the case of four equal sectors. See \cite[Lemma 1]{PS}. 

Moreover, if assumption \eqref{ipangoli} is not satisfied, the first positive eigenvalue can be smaller than $1/2$: for example, if $\beta_1=\pi/6$, $\beta_2=\pi/3$, $\sigma_1=10^{-1}$, $\sigma_2=10^3$ and $\sigma_3=10$, direct calculation shows that, for $\gamma=1/2$,  $det\left(M_3M_2M_1-I\right)<0$, hence the first positive eigenvalue is smaller that $1/2$.

\end{rem}
\section{Shape derivative of the solution of a Neumann problem with respect to movements of a polygonal partition}\label{shape}
Let $\om\subset\RR^2$ be a bounded open set such that $\der\om$ is Lipschitz continuous with constants $r_0$ and $K_0$ and $diam(\om)\leq L$.

Let us consider a polygonal partition $\mathcal{P}\subset\om$ such that $dist(\mathcal{P},\der\om)\geq d_0$ and such that
\begin{equation*}
	\mathcal{P}=\cup_{i=1}^M\overline{\mathcal{P}}_i,
\end{equation*}
where $\mathcal{P}_i$ is an open polygon.

Let us denote by $Q_1,\ldots,Q_{N}$ the vertexes of the polygons that compose $\mathcal{P}$.

Let us also assume that:
\begin{equation*}
\mbox{each }Q_j\mbox{ does not belong to more the three sides of polygons};
\end{equation*}
\begin{equation*}
dist(Q_j,Q_k)\geq d_0 \mbox{ if }j\neq k;
\end{equation*}
\begin{equation*}
\mbox{each polygon }\mathcal{P}_i\mbox{ contains a disk of radius greater than } r_1
\end{equation*}
denoting by $\beta_j^k$, $k=1,\ldots,k_j\leq 3$, the angles in the vertex $Q_j$, we assume there exists $\overline{\beta}\in (0,\pi)$ 
such that
\begin{equation}\label{ippart}
\begin{split}
\mbox{if }k_j=2,\quad &0<\overline{\beta}<\beta_j^k<2\pi-\overline{\beta}\mbox{ for }k=1,2\\ 
\mbox{if }k_j=3,\quad &0<\overline{\beta}<\beta_j^k<\pi-\overline{\beta}\mbox{ for }k=1,2,3.
\end{split}\end{equation}
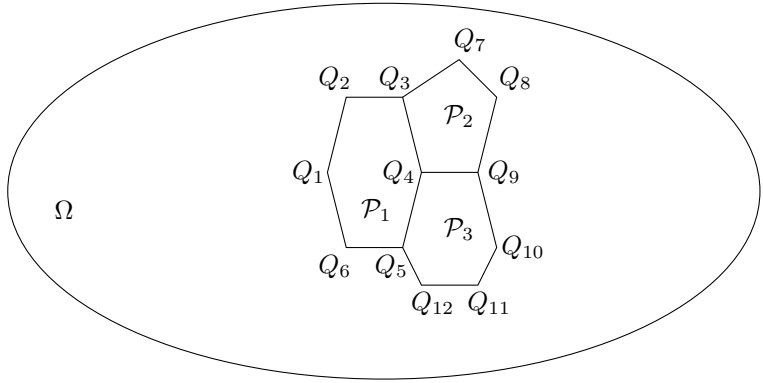
\begin{figure}
\centering
\begin{tikzpicture}[scale = 0.5]
                \draw  (5,5) ellipse [x radius = 10cm, y radius = 5cm];
								\draw  (3.5,5.5) -- (4,7.5) -- (5.5,7.5) -- (6,5.5) -- (5.5,3.5) -- (4,3.5) -- (3.5,5.5);
								\draw (6,5.5) -- (7.5,5.5) -- (8,3.5) -- (7.5,2.5) -- (6,2.5) -- (5.5,3.5);
								\draw (5.5,7.5) -- (7,8.5) -- (8, 7.5) -- (7.5,5.5);
								
								\draw (-3.5,4.5) node { $\Omega$};
								\draw (4.8,4.5) node {$\mathcal{P}_1$};
								\draw (7,7) node {$\mathcal{P}_2$};
								\draw (7,4) node {$\mathcal{P}_3$};
								%\draw (10,5) node {$\sigma_4$};
								\draw (3,5.5) node {$Q_1$};
								\draw (3.7,8) node {$Q_2$};
								\draw (5.3,8) node {$Q_3$};
								\draw (7.3,9) node {$Q_7$};
								\draw (8.5,8) node {$Q_8$};
								\draw (8.2,5.5) node {$Q_9$};
								\draw (5.4,5.5) node {$Q_4$};
								\draw (5.2,3) node {$Q_5$};
								\draw (3.7,3) node {$Q_6$};
								\draw (8.7,3.5) node {$Q_{10}$};
								\draw (7.8,2) node {$Q_{11}$};
								\draw (6.3,2) node {$Q_{12}$};
    \end{tikzpicture}	
\caption{Example of admissible polygonal partition.} \label{fig1}
\end{figure}
Let 
\begin{equation*}
	\sigma_0(x)=\sum_{i=1}^M\sigma_i(x)\chi_{\mathcal{P}_i}+\sigma_{M+1}\chi_{\om\setminus\mathcal{P}},
\end{equation*}
with
\begin{equation*}
	0<c_0^{-1}<\sigma_i<c_0,\mbox{ for every }i=1,\ldots,M+1.
\end{equation*}
We will sometimes use the notation $\mathcal{P}_{M+1}=\om\setminus\mathcal{P}$.

Let $f\in H^{-1/2}(\der\om)$ such that $\int_{\der\om}f=0$ and let $u_0\in H^1(\om)$ be the unique solution to
the boundary value problem
\begin{equation*}
    \left\{\begin{array}{rcl}
             \dive(\sigma_0\nabla u_0) & = & 0\mbox{ in }\om, \\
             \sigma_0\frac{\der u_0}{\der\nu} & = & f \mbox{ on }\der\om,\\
						\int_{\der\om}u_0&=&0,
           \end{array}
    \right.
\end{equation*}
where $\nu$ denotes the unit outer normal to $\der\om$.

Let $V=(v_1,\ldots,v_{N})\in \RR^{2N}$ be an arbitrary vector that represents the movements of vertexes of the polygons. 

For $t\geq 0$ let $\Psi^V$ be a function defined on $\cup_{i=1}^M\der\mathcal{P}_i$, such that, if $\overline{Q_jQ_k}$ is a side of one of the polygons, we have
\[\Psi^V(x):= v_j+\frac{(x-Q_j)\cdot(Q_k-Q_j)}{|Q_k-Q_j|}(v_k-v_j)\mbox{ for }x\in \overline{Q_jQ_k}.\]

We extend the function $\Psi^V$ to a $W^{1,\infty}$ function with compact support in $\om$.

Let $\Phi_t(x)=x+t\Psi^V(x)$, denote by $\mathcal{P}_i^t$ the polygon whose boundary is given by $\Phi_t(\der\mathcal{P}_i)$ and let
$\mathcal{P}^t=\cup_{i=1}^M\mathcal{P}^t_i$. The points $Q_j^t=Q_j+tv_j$ for $j=1,\ldots,N$ are the vertexes of polygons in $\mathcal{P}^t$.

For $t$ sufficiently small (depending on $V$, $r_1$, $\overline{\beta}$ and $d_0$) the new partition has the same properties of the original one, with slightly different constants. 

Let 
\begin{equation*}
	\sigma_t(x)=\sum_{i=1}^{M+1}\sigma_i(x)\chi_{\mathcal{P}^t_i}
\end{equation*}
and  let $u_t\in H^1(\om)$ be the unique solution to the boundary value problem
\begin{equation*}
    \left\{\begin{array}{rcl}
             \dive(\sigma_t\nabla u_t) & = & 0\mbox{ in }\om, \\
             \sigma_t\frac{\der u_t}{\der\nu} & = & f \mbox{ on }\der\om,\\
						\int_{\der\om}u_t&=&0.
           \end{array}
    \right.
\end{equation*}

The aim of this section is to evaluate, for $y\in\der\om$, the derivative of $u$ in the direction $V$, that is 
\[u^\prime(y)=\lim_{t\to 0} \frac{u_t(y)-u_0(y)}{t}.\]

As in \cite{BFV17}, thanks to Theorem \ref{stimagrad}, we can obtain this derivative by direct calculation, but, since the geometry of the problem makes these calculations quite involved, we follow here a different strategy. \\
Let $\tilde{u}_t(x)=u_t\circ\Phi_t(x)$  and let us evaluate the material derivative $\dot{u}$, that is the weak limit of $\frac{\tilde{u}_t-u}{t}$. 
Then, from the material derivative $\dot{u}$ we obtain the boundary values of the shape derivative $u^\prime$.

Note that for sufficiently small $t$ ( $t\leq \frac{1}{2\|\Psi^V\|_{W^{1,\infty}}}$) the function $\Phi^{-1}_t$ exists in $\Omega$. Let us define
\begin{equation}\label{2.6}
	A(t)=\left(D\Phi_t^{-1}\right)\left(D\Phi_t^{-1}\right)^T det\left(D\Phi_t\right)
\end{equation}
and
\begin{equation}\label{2.7}
	\mathcal{A}=\frac{dA}{dt}_{|_{t=0}}=div(\Psi^V)Id-(D\Psi^V+(D\Psi^V)^T)
\end{equation}
where $D\Phi_t^{-1}$ and $D\Psi^V$ represent the Jacobian matrices of $\Phi_t^{-1}$ and $\Psi^V$.\\

Let $\tilde{u}_t(x)=u_t\circ\Phi_t(x)$  and let us evaluate the material derivative $\dot{u}$, that is the weak limit of $\ds{\frac{\tilde{u}_t-u}{t}}$. 
\begin{lm}\label{lemmamaterial}
The material derivative $\dot{u}\in H^1(\om)$ solves 
\begin{equation}\label{material}
	\int_{\om}\sigma_0\nabla \dot{u}\cdot\nabla w=-\int_\om \sigma_0 \mathcal{A}\nabla u\cdot\nabla w \quad\forall w\in H^1(\om) 
\end{equation}
with $\int_{\der\om}\dot{u}=0$.
\end{lm}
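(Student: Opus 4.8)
The plan is to transport the equation for $u_t$ back to the fixed domain $\om$ via the change of variables $x\mapsto\Phi_t(x)$, so that all the $t$-dependence is moved into the coefficient matrix rather than into the geometry. Writing $\tilde u_t=u_t\circ\Phi_t$, the weak formulation $\int_\om\sigma_t\nabla u_t\cdot\nabla\varphi\,dy=\int_{\der\om}f\varphi$ for all $\varphi\in H^1(\om)$ becomes, after substituting $y=\Phi_t(x)$ and using $\nabla_y u_t=(D\Phi_t)^{-T}\nabla_x\tilde u_t$ together with $dy=\det(D\Phi_t)\,dx$, an identity of the form
\begin{equation*}
\int_\om \sigma_0\, A(t)\,\nabla\tilde u_t\cdot\nabla w\,dx=\int_{\der\om}f\,w
\end{equation*}
for all test functions $w$, where $A(t)$ is exactly the matrix defined in \eqref{2.6}. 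Here I use that $\sigma_t\circ\Phi_t=\sigma_0$ (the pushed-forward partition $\mathcal P^t$ pulls back to $\mathcal P$) and that $\Phi_t$ is the identity near $\der\om$, so $f$ and the boundary integral are unchanged. The normalization $\int_{\der\om}\tilde u_t=0$ also persists since $\Phi_t=\mathrm{id}$ on $\der\om$.

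Next I would subtract the $t=0$ equation $\int_\om\sigma_0\,A(0)\nabla u\cdot\nabla w=\int_{\der\om}fw$ (note $A(0)=\mathrm{Id}$), divide by $t$, and obtain
\begin{equation*}
\int_\om\sigma_0\,A(t)\,\nabla\Big(\frac{\tilde u_t-u}{t}\Big)\cdot\nabla w
=-\int_\om\sigma_0\,\frac{A(t)-A(0)}{t}\,\nabla u\cdot\nabla w.
\end{equation*}
The strategy is then to pass to the limit $t\to0$. First I establish a uniform $H^1$ bound on $\frac{\tilde u_t-u}{t}$: choosing $w=\frac{\tilde u_t-u}{t}$ on the left, the ellipticity of $\sigma_0 A(t)$ (uniform for small $t$, since $A(t)\to\mathrm{Id}$) gives coercivity, while the right-hand side is controlled by $\|\frac{A(t)-A(0)}{t}\|_{L^\infty}\|\nabla u\|_{L^2}$, and $\frac{A(t)-A(0)}{t}$ is uniformly bounded because $A$ is smooth in $t$ with $A'(0)=\mathcal A$ as in \eqref{2.7}. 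This yields a bounded sequence, so a subsequence of $\frac{\tilde u_t-u}{t}$ converges weakly in $H^1$ to some limit, which I call $\dot u$; the mean-zero constraint passes to the limit by the trace theorem.

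Finally I take $t\to0$ in the displayed identity: on the right, $\frac{A(t)-A(0)}{t}\to\mathcal A$ strongly in $L^\infty$ while $\nabla u$ is fixed, so the right side converges to $-\int_\om\sigma_0\mathcal A\nabla u\cdot\nabla w$; on the left, $A(t)\to\mathrm{Id}$ strongly and $\nabla\frac{\tilde u_t-u}{t}\rightharpoonup\nabla\dot u$ weakly, and the product of a strongly convergent and a weakly convergent factor passes to the limit, giving $\int_\om\sigma_0\nabla\dot u\cdot\nabla w$. This produces exactly \eqref{material}, and since the limit problem has a unique solution the whole family (not merely a subsequence) converges. The main obstacle I anticipate is the justification of the uniform $H^1$ estimate and the weak-times-strong limit passage: one must check that the quadratic form $\sigma_0A(t)$ stays uniformly elliptic for small $t$ and that $A(t)$ is genuinely differentiable at $t=0$ with derivative $\mathcal A$, which relies on $\Psi^V\in W^{1,\infty}$ so that $D\Phi_t^{-1}$ and $\det D\Phi_t$ are Lipschitz in $t$. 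Notably, the low regularity of $u$ near the vertexes (Theorem \ref{stimagrad}) is \emph{not} needed here, since this argument only uses $u\in H^1(\om)$; the gradient estimate \eqref{gradest} will instead be essential in the subsequent step that converts $\dot u$ into the boundary shape derivative $u'$.
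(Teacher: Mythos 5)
Your proof is correct: the paper itself gives no argument for this lemma, deferring to \cite{A} and \cite{BMPS}, and the standard velocity-method proof given in those references is exactly what you wrote (pull back by $\Phi_t$ using $\sigma_t\circ\Phi_t=\sigma_0$ and $\Phi_t=\mathrm{id}$ near $\der\om$, subtract, divide by $t$, get a uniform $H^1$ bound from uniform ellipticity of $\sigma_0A(t)$ plus the zero boundary mean, pass to the limit by strong-times-weak convergence, and upgrade subsequential to full convergence by uniqueness of the limit problem). Your closing observation is also accurate: the vertex regularity of Theorem \ref{stimagrad} plays no role here and enters only in Proposition \ref{proposizione}, where the right-hand side of \eqref{material} is integrated by parts.
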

See \cite{A} and \cite{BMPS} for the proof.\\

We now want to write equation \eqref{material} in a different way by integration by parts. Since the functions involved are not regular enough to perform this integration, we need to analyze carefully  what happens close to vertexes. This is the point where Theorem \ref{stimagrad}  comes into play. 

\begin{prop}\label{proposizione}
Let us denote by $S_k$ for $k=1,\ldots,M_1$ the sides of the polygons in $\mathcal{P}$.
For each  $v\in H^1(\om)$ solution of
\[\dive\left(\sigma_0\nabla v\right)=0\mbox{ in }\om,\]
we have,
\begin{equation}\label{42}
	\int_\om\sigma_0\nabla\dot{u}\cdot\nabla v=\sum_{k=1}^{M_1}\int_{S_k}[\sigma_0b]\cdot n_kds,
\end{equation}
 where 
\begin{equation}\label{b}
	b=\left(\Psi^V\cdot\nabla u_0\right)\nabla v+\left(\Psi^V\cdot\nabla v\right)\nabla u_0-\left(\nabla u_0\cdot\nabla v\right)\Psi^V,
\end{equation}
$n_k$ is a normal unit vector to $S_k$ and $[\sigma_0b]=\sigma^-b^--\sigma^+b^+$ where $\sigma^-,b^-$ are the functions $\sigma_0$, $b$  restricted to the polygon with side $S_k$ and with outer normal $n_k$ while $\sigma^+,b^+$ are the functions $\sigma_0$, $b$  restricted to the polygon with side $S_k$ and with inner normal $n_k$.
 \end{prop}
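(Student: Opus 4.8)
The plan is to test the material-derivative identity \eqref{material} against $w=v$, and then to convert the resulting volume integral into the sum of interface integrals by an integration by parts that is legitimate away from the vertexes and is controlled near them precisely by Theorem \ref{stimagrad}.

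\textbf{Step 1 (reduction).} Since $v\in H^1(\om)$ is an admissible test function, choosing $w=v$ in \eqref{material} gives
\[
\int_{\om}\sigma_0\nabla\dot{u}\cdot\nabla v=-\int_\om\sigma_0\,\mathcal{A}\,\nabla u_0\cdot\nabla v .
\]
Thus it suffices to prove that $-\int_\om\sigma_0\mathcal{A}\nabla u_0\cdot\nabla v=\sum_{k=1}^{M_1}\int_{S_k}[\sigma_0 b]\cdot n_k\,ds$.

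\textbf{Step 2 (pointwise identity inside each polygon).} On each $\mathcal{P}_i$ the coefficient $\sigma_0$ equals the constant $\sigma_i$, so $\dive(\sigma_0\nabla u_0)=\dive(\sigma_0\nabla v)=0$ reduce to $\Delta u_0=\Delta v=0$ there. With $\mathcal{A}=\dive(\Psi^V)\,Id-(D\Psi^V+(D\Psi^V)^T)$ from \eqref{2.7}, I would verify by a direct computation the algebraic identity
\[
\sigma_0\,\mathcal{A}\,\nabla u_0\cdot\nabla v=-\dive(\sigma_0 b)\qquad\text{in each }\mathcal{P}_i .
\]
Expanding $\dive b$ with $b$ as in \eqref{b}, the two Laplacian terms $(\Psi^V\cdot\nabla u_0)\Delta v$ and $(\Psi^V\cdot\nabla v)\Delta u_0$ vanish by harmonicity; the remaining second-order terms, which are $\sum_{i,j}\Psi^V_i(\der_{ij}u_0\,\der_j v+\der_{ij}v\,\der_j u_0)-\sum_{i,j}\Psi^V_j(\der_{ij}u_0\,\der_i v+\der_i u_0\,\der_{ij}v)$, cancel after relabelling indices by symmetry of mixed partials; and the surviving first-order terms reproduce exactly $-\mathcal{A}\nabla u_0\cdot\nabla v$.

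\textbf{Step 3 (integration by parts with excised vertexes).} Because $\nabla u_0$ and $\nabla v$ become singular at the vertexes, the divergence theorem cannot be applied directly. I would fix $\ep>0$ small, remove the disks $B_\ep(Q_j)$, and work on $\om_\ep=\om\setminus\bigcup_j\overline{B_\ep(Q_j)}$. On $\om_\ep$ each of $\nabla u_0,\nabla v$ is Lipschitz up to the relatively open portions of the sides by \cite{LN}, so on every $\mathcal{P}_i\cap\om_\ep$ the divergence theorem yields $\int_{\mathcal{P}_i\cap\om_\ep}\dive(\sigma_i b)=\int_{\der(\mathcal{P}_i\cap\om_\ep)}\sigma_i\,b\cdot n\,ds$. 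Summing over all polygons and using Step 2, the left-hand sides add to $-\int_{\om_\ep}\sigma_0\mathcal{A}\nabla u_0\cdot\nabla v$, which converges to $-\int_\om\sigma_0\mathcal{A}\nabla u_0\cdot\nabla v$ as $\ep\to0$ (the integrand is $O(\rho^{2\gamma-2})$, integrable in the plane since $\gamma>0$). On the right, every side $S_k$ is shared by two polygons whose outer normals there are $n_k$ and $-n_k$, so the two contributions combine into $\int_{S_k\setminus\bigcup B_\ep}[\sigma_0 b]\cdot n_k\,ds$, matching the sign convention for $[\sigma_0 b]$; since $\Psi^V$, and hence $b$, has compact support in $\om$, no term arises on $\der\om$.

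\textbf{Step 4 (the main obstacle: vanishing of the vertex arcs).} The only remaining boundary pieces are the arcs $\der B_\ep(Q_j)\cap\mathcal{P}_i$, and this is where Theorem \ref{stimagrad} is indispensable. For $\ep$ small enough that $B_\ep(Q_j)$ contains no other vertex (using $dist(Q_j,Q_k)\ge d_0$), the coefficient near $Q_j$ has exactly the sector form treated in Theorem \ref{stimagrad} (at most three sides, with the angle bound \eqref{ippart}), so $|\nabla u_0|,|\nabla v|\le C\rho^{\gamma-1}$ with a common $\gamma>1/2$, while $|\Psi^V|$ is bounded; hence $|b|\le C\rho^{2\gamma-2}$ on these arcs. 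Since the total arc length around each vertex is $2\pi\ep$,
\[
\left|\int_{\der B_\ep(Q_j)\cap\mathcal{P}_i}\sigma_0 b\cdot n\,ds\right|\le C\,\ep\cdot\ep^{2\gamma-2}=C\,\ep^{2\gamma-1}\to0\quad(\ep\to0),
\]
precisely because $2\gamma-1>0$. Letting $\ep\to0$ in Step 3 then gives $-\int_\om\sigma_0\mathcal{A}\nabla u_0\cdot\nabla v=\sum_k\int_{S_k}[\sigma_0 b]\cdot n_k\,ds$, which together with Step 1 proves \eqref{42}. The genuinely delicate point is this last estimate: it is exactly the threshold $\gamma>1/2$ furnished by Proposition \ref{sviluppo} that renders the vertex contributions negligible, and without it (for instance at a four-edge vertex) the integration by parts, and the identity itself, would break down.
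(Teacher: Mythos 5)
Your proposal is correct and follows essentially the same route as the paper: take $w=v$ in \eqref{material}, use the pointwise identity $-\mathcal{A}\nabla u_0\cdot\nabla v=\dive(b)$ in each polygon (valid by harmonicity and constancy of $\sigma_0$ there), integrate by parts on the domain with disks around the vertexes removed, and kill the arc contributions $O(\ep^{2\gamma-1})$ using $\gamma>1/2$ from Theorem \ref{stimagrad}. The only cosmetic difference is that the paper bounds the excised volume term by $C\ep^{2\gamma}$ rather than invoking integrability of $\rho^{2\gamma-2}$, and it records explicitly (their \eqref{1-4}) the convergence $\int_{S_k\setminus B_\ep}[\sigma_0 b]\cdot n_k\to\int_{S_k}[\sigma_0 b]\cdot n_k$, which in your write-up is implicit but also rests on $\gamma>1/2$.
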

\begin{proof}
For $0<\ep<\frac{d_0}{4}$, let 
\[B_\ep=\cup_{j=1}^NB(P_j,\ep),\]
and let us denote by
\[u_i={u_0}_{|_{\mathcal{P}_i}}\mbox{ and }v_i={v}_{|_{\mathcal{P}_i}} \mbox{ for }i=1,\ldots,M.\]
Each of these functions is harmonic in $\mathcal{P}_i$; moreover $u_i, v_i\in H^2(\mathcal{P}_i\setminus B_\ep)$ and, by the regularity estimates in \cite{LN}, $u_i, v_i\in W^{1,\infty}(\overline{\mathcal{P}_i\setminus B_\ep})$.

The functions $u_{M+1}, v_{M+1}$ are harmonic in $\mathcal{P}_{M+1}$ and belong to $H^2(\mathcal{P}_{M+1}\setminus B_\ep)$ and to  
$W^{1,\infty}\left(\overline{\mathcal{P}_j\setminus \left(B_\ep\cap \{x\in\om\,:\,d(x,\der\om)<\ep\}\right)}\right)$.
Let us now consider equation \eqref{material} with $w=v$ and write
\begin{equation}\label{mat2}
	\int_\om \sigma_0\nabla\dot{u}\cdot\nabla v=-\int_{\om\setminus B_\ep} \sigma_0\mathcal{A}\nabla u_0\cdot\nabla v-
	\int_{B_\ep} \sigma_0\mathcal{A}\nabla u_0\cdot\nabla v.
\end{equation}
 In each set $\mathcal{P}_i\setminus B_\ep$ we have that
\begin{equation}\label{3.10}
	-\mathcal{A}\nabla u_0\cdot\nabla v=\dive(b)
\end{equation}
for $b$ given by \eqref{b}. Here we also used the fact that $\Delta u_i=\Delta v_i=0$ in $\mathcal{P}_i$.

Now, we integrate by parts in each $\mathcal{P}_j\setminus B_\ep$ and, recalling that $\Psi^V$ and, hence, $b$ have compact support in $\om$, we have
\begin{equation}\label{114}
	-\int_{\om\setminus B_\ep} \sigma_0\mathcal{A}\nabla u_0\cdot\nabla v=\sum_{i=1}^{M+1} \int_{\mathcal{P}_i}\sigma_i \dive(b)
=\sum_{k=1}^{M_1}\int_{S_k\setminus B_\ep}[\sigma_0b]\cdot n_k+\int_{\der B_\ep} \sigma_0 b\cdot n,
\end{equation}
where $n$ is the exterior normal to $\der B_\ep$.
 By putting together \eqref{mat2} and \eqref{114} we have
\begin{equation}\label{22}
	\int_\om \sigma_0\nabla\dot{u}\cdot\nabla v=\sum_{k=1}^{M_1}\int_{S_k\setminus B_\ep}[\sigma_0b]\cdot n_k+\int_{\der B_\ep} \sigma_0 b\cdot n-\int_{B_\ep} \sigma_0\mathcal{A}\nabla u_0\cdot\nabla v.
\end{equation}
Functions $u_0$ and $v$ both solve the same equation and, hence, for the assumption \eqref{ippart} on the polygons, they satisfy estimate  \eqref{gradest}. Then, we have
\begin{equation}\label{1-2}
	\left|\int_{B_\ep} \sigma_0\mathcal{A}\nabla u_0\cdot\nabla v\right|\leq C\ep^{2\gamma}
\end{equation}
and
\begin{equation}\label{1-3}
	\left|\int_{\der B_\ep} \sigma_0 b\cdot n\right|\leq C\ep^{2\gamma-1}.
\end{equation}
Since $\gamma>1/2$ (see Theorem \ref{stimagrad}), both the integrals in the right hand side of \eqref{22} tend to zero for $\ep\to 0$. Moreover, again by \eqref{gradest}, for $\ep\to 0$
\begin{equation}\label{1-4}
 \int_{S_k\setminus B_\ep}[\sigma_0b]\cdot n_k\to \int_{S_k}[\sigma_0b]\cdot n_k.
\end{equation}
By \eqref{22}, \eqref{1-2}, \eqref{1-3} and \eqref{1-4} we have \eqref{42}.
\end{proof}

\begin{rem}\label{calcolosalto}
Let us evaluate more precisely the jump $[\sigma_0b]$. 
 
Denoting by $\tau_k$ a direction orthogonal to $n_k$ we have, 
\begin{eqnarray}\label{salto}
	[\sigma_0b]\cdot n_k&=&\left[\sigma_0\left(\Psi^V\cdot\nabla u_0\right)\frac{\der v}{\der n_k}+\sigma_0\left(\Psi^V\cdot\nabla u_0\right)\frac{\der u_0}{\der n_k}\right.\nonumber\\&&\left.\phantom{\frac{\der u_0}{\der n_k}}- \sigma_0(\nabla u_0\cdot\nabla v)\Psi^V\cdot n_k\right]\nonumber\\
	&=&(\Psi^V\cdot n_k)\left[\sigma_0 \frac{\der u_0}{\der n_k}\frac{\der v}{\der n_k}-\sigma_0\frac{\der u_0}{\der \tau_k}\frac{\der v}{\der \tau_k} \right]\nonumber\\&&+(\Psi^V\cdot \tau_k)\left[\sigma_0 \frac{\der u_0}{\der \tau_k}\frac{\der v}{\der n_k}+\sigma_0\frac{\der u_0}{\der n_k}\frac{\der v}{\der \tau_k}\right].
\end{eqnarray}
By transmission conditions across $S_k$ for solution of the equation $\dive\left(\sigma_0\nabla u\right)$, we have
  \begin{equation}
\label{tange}
\left[\sigma_0 \frac{\der u_0}{\der \tau_k}\frac{\der v}{\der n_k}+\sigma_0\frac{\der u_0}{\der n_k}\frac{\der v}{\der \tau_k}\right]=0	 
 \end{equation}
and
  \begin{equation}
\label{norma}
\left[\sigma_0 \frac{\der u_0}{\der n_k}\frac{\der v}{\der n_k}-\sigma_0\frac{\der u_0}{\der \tau_k}\frac{\der v}{\der \tau_k} \right]
=(\sigma^--\sigma^+)\left(\frac{\sigma^+}{\sigma^-}\frac{\der u^+}{\der n_k}\frac{\der v^+}{\der n_k}+\frac{\der u^+}{\der \tau_k}\frac{\der v^+}{\der \tau_k}\right).
 \end{equation}
\end{rem}

\subsection{Boundary values of the shape derivative}
We now want to obtain the boundary values of the shape derivatives $u^\prime$. 
Since, by chain rule, 
\[u^\prime=\dot{u}-\Psi^V\cdot\nabla u\]
and $\Psi^V$ has compact support in $\om$, it is enough to get the boundary values of $\dot{u}$.
Let us now consider the Neumann function $N$ with pole at the boundary of $\om$, that is, for $y\in\der\om$ the unique solution to the boundary value problem
\begin{equation*}
    \left\{\begin{array}{rcl}
             \dive(\sigma_0\nabla N(\cdot,y) )& = & 0\mbox{ in }\om, \\
             \sigma_0\frac{\der N}{\der\nu}(\cdot,y) & = & -\delta_y(\cdot)+\frac{1}{|\der\om|} \mbox{ on }\der\om,
           \end{array}
    \right.
\end{equation*}
Let $y$ be a fixed point on $\der\om$. It is well known that $N(\cdot,y)$ is in $W^{1,1}(\om)$. Then, since  $\Psi^V$ has compact support in $\om$ and $\mathcal{P}\subset\om_{d_0}$,  it is possible to construct a sequence $v_m\in C^1(\om)$ that converges to $N(\cdot,y)$ in $W^{1,1}(\om)$ and in $C^1(\om_{d_0})$. Moreover since  $\dot{u}$ is smooth near $\der\Omega$ we can insert $v_m$ into \eqref{42} and pass to the limit, concluding that
\begin{equation}\label{boundary}
	u^\prime(y)=\dot{u}(y)=\sum_{k=1}^{M_1}\int_{S_k}(\sigma^--\sigma^+)\left(\frac{\sigma^+}{\sigma^-}u^+_{n_k}N^+_{n_k}(y,\cdot)+
	u^+_{\tau_k} N^+_{\tau_k}(y,\cdot)\right)(\Psi^V\cdot n_k)ds,
\end{equation}
which is the same formula we have in \cite[Theorem 4.6]{BFV17} for $g=-\delta_y+\frac{1}{|\der\om|}$.\\
\begin{rem}
The  Neumann-to-Dirichlet  map is the operator $\mathcal{N}_{\sigma_0}: H_0^{-1/2}(\partial \Omega) \to H_0^{1/2}(\partial \Omega)$, defined by
\begin{equation}
\mathcal{N}_{\sigma_0}(f) = u|_{\partial \Omega},
\end{equation}
where $H^{s}_0(\partial \Omega) = \{ f \in H^s(\partial \Omega) : \int_{\partial \Omega} f = 0\}$, $g \in H_0^{-1/2}(\partial \Omega)$ and $u$ is the unique $H^1(\Omega)$ weak solution of the Dirichlet problem for the conductivity equation
\begin{equation}
\nabla \cdot (\sigma_0 \nabla u) = 0 \; \textrm{on} \; \Omega, \; \; \; \left. \sigma_0 \frac{\partial u}{\partial n}\right|_{\partial \Omega}=f,
\label{schr}
\end{equation}
satisfying the normalization condition
$$\int_{\partial \Omega} u\, d\sigma = 0,
$$ 
where $\nu$ is the outer normal of $\partial \Omega$.

Let $\mathcal{P}$ denote a partition of vertices $Q=(Q_1,Q_2,\dots,Q_N)$ and denote by $\mathcal{Q}$ the subset of points $Q\in\Omega_{d_0}^N$ satisfying the assumptions stated at the beginning of Section 3.  For $f,g \in H^{-1/2}_{0}(\partial\Omega)$ we can define $\tilde{F}:\mathcal{Q}\rightarrow \RR$ as follows
\[
\tilde{F}(Q)=<g, \mathcal{N}_{{\sigma_0}}(f)>\quad\forall Q\in \mathcal{Q}.
\]
Let $Q^t=Q+tV$. Then 
\[
\frac{d\tilde{F}(Q^t)}{dt}|_{t=0}=\int_{\partial\Omega}gu'.
\]
Now, observing that $u'=\dot{u}$ on $\partial\Omega$ 
\[
\int_{\partial\Omega}gu'=\int_{\partial\Omega}g\dot{u}=\int_{\Omega}\sigma_0\nabla w\cdot\nabla\dot{u}
\]
where $w\in H^1(\Omega)$ solves
\begin{equation}
\nabla \cdot (\sigma_0 \nabla w) = 0 \; \textrm{on} \; \Omega, \; \; \; \left. \sigma_0 \frac{\partial w}{\partial n}\right|_{\partial \Omega}=g
%\label{schr}
\end{equation}
and from Proposition \ref{proposizione} we get that
\[
\frac{d\tilde{F}(Q^t)}{dt}|_{t=0}=\sum_{k=1}^{M_1}\int_{S_k}(\sigma^--\sigma^+)\left(\frac{\sigma^+}{\sigma^-}u^+_{n_k}w^+_{n_k}(y,\cdot)+
	u^+_{\tau_k} w^+_{\tau_k}(y,\cdot)\right)(\Psi^V\cdot n_k)ds.
\]
Finally, arguing similarly as in \cite{BFV17} it is possible to establish that also $F$ is differentiable.

\end{rem}

\begin{rem}
Proposition \ref{proposizione} holds true also in different assumptions on the geometry of the domain. For example if there are more than one polygons $\mathcal{P}$ inside the domain (see Figure \ref{fig2} on the left) or if the polygons are nested (see Figure \ref{fig2} on the right). The only condition on the partition is that each vertex has positive distance from the boundary of $\om$ and from the other vertexes and that there are no more that 3 sides intersecting at each vertex.   
\end{rem}
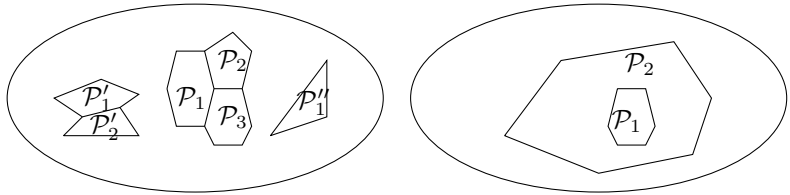
\begin{figure}
\centering
\begin{tabular}{cc}
\begin{tikzpicture}[scale = 0.25]
                \draw  (5,5) ellipse [x radius = 10cm, y radius = 5cm];
								\draw  (3.5,5.5) -- (4,7.5) -- (5.5,7.5) -- (6,5.5) -- (5.5,3.5) -- (4,3.5) -- (3.5,5.5);
								\draw (6,5.5) -- (7.5,5.5) -- (8,3.5) -- (7.5,2.5) -- (6,2.5) -- (5.5,3.5);
								\draw (5.5,7.5) -- (7,8.5) -- (8, 7.5) -- (7.5,5.5);
								\draw (-2,3) -- (-1,4) -- (1,4.5) -- (2,3)-- (-2,3);
								\draw (-1,4) -- (-2.5,5) -- (0,6) -- (2,5.2) -- (1,4.5);
								\draw (9,3) -- (12,7) -- (12,4) -- (9,3);
								\draw (4.8,5) node {$\mathcal{P}_1$};
								\draw (7,7) node {$\mathcal{P}_2$};
								\draw (7,4) node {$\mathcal{P}_3$};
								\draw (-0.2,5) node {$\mathcal{P}^\prime_1$};
								\draw (0.2,3.5) node {$\mathcal{P}^\prime_2$};
								\draw (11.3,4.8) node {$\mathcal{P}^{\prime\prime}_1$};
										\end{tikzpicture}

&
    \begin{tikzpicture}[scale = 0.25]
                \draw  (5,5) ellipse [x radius = 10cm, y radius = 5cm];
								\draw  (0,3) -- (5,1) -- (10,2) -- (11,5) -- (9,8) -- (3,7) -- (0,3);
								\draw (6,5.5) -- (7.5,5.5) -- (8,3.5) -- (7.5,2.5) -- (6,2.5) -- (5.5,3.5) -- (6,5.5);
								\draw (6.5,3.8) node {$\mathcal{P}_1$};
								\draw (7.2,6.8) node {$\mathcal{P}_2$};
    \end{tikzpicture}
\end{tabular}

\caption{Left: disjoint polygonal partitions; right: nested polygons} \label{fig2}
\end{figure}

\end{document}